\newtheorem{theorem}{Theorem}
\newtheorem{lemma*}{Lemma}
\newtheorem{proposition}{Proposition}
\begin{document}
\title{Boundary Control of Reaction-Diffusion PDEs\\ on Balls in Spaces of Arbitrary Dimensions}
%
\author{Rafael Vazquez}\address{Department of Aerospace Engineering, Universidad de Sevilla, Camino de los Descubrimiento s.n., 41092 Sevilla, Spain.}
\author{Miroslav Krstic}\address{Department of Mechanical and Aerospace Engineering, University of California San Diego, La Jolla, CA 92093-0411, USA.}
\date{October 2015}
\begin{abstract} An explicit output-feedback boundary feedback law is introduced that stabilizes an unstable linear constant-coefficient reaction-diffusion equation on an $n$-ball (which in 2-D reduces to a disk and in 3-D reduces to a sphere) using only measurements from the boundary. The backstepping method is used to design both the control law and a boundary observer. To apply backstepping the system is reduced to an infinite sequence of 1-D systems using spherical harmonics. Well-posedness and stability are proved in the $H^1$ space. The resulting control and output injection gain kernels are the product of the backstepping kernel used in control of one-dimensional reaction-diffusion equations and a function closely related to the Poisson kernel in the $n$-ball.
 \end{abstract}
%
%
\subjclass{93B51 / 93B52 / 94C05 / 93C20 / 93D05 / 93D15}
\keywords{parabolic equations / control design / observer design / spherical harmonics / infinite-dimensional backstepping}
\maketitle
\section*{Introduction}


In this paper we present an explicit full-state boundary feedback law to stabilize an unstable linear constant-coefficient reaction-diffusion equation on an $n$-ball (which in 2-D reduces to a disk and in 3-D reduces to a sphere).

We introduce the following tools in the paper. Based on the domain shape, we employ ultraspherical coordinates (which in 2-D are polar coordinates and in 3-D spherical coordinates), and then transform the system into an infinite sequence of independently-controlled 1-D systems by using spherical harmonics. For each harmonic we design a feedback law using the backstepping method~\cite{krstic}, which allows us to obtain exponential stability of the origin in the $H^1$ norm. The same procedure is used to produce an observer using boundary measurements (a weighted spatial average of the state could also be considered, see for instance~\cite{tsubakino}). The combination of both designs produces the final result.

This result extends~\cite{disk} and~\cite{disk2}, which describe, respectively, the (full-state) control design for the particular case of a 2-D disk and a general $n$-ball; that same design, augmented with an observer, is applied in~\cite{jie} to multi-agent deployment in 3-D space, with the agents distributed on a disk-shaped grid and commanded by leader agents located at the boundary. Older, related results that use backstepping include the design an output feedback law for a convection problem on an annular domain (see~\cite{convloop}, also~\cite{chengkang-xie}), or observer design on cuboid domains~\cite{kugi}. However, going from an annular domain to a disk (which includes the origin) complicates the design, as (apparent) singularities appear on the equations and have to be dealt with. This difficulty also shows up in the 3-D and higher-dimensional settings. Despite this complication, we are able to explicitly find the backstepping kernels and subsequently reconstruct our control law back in physical space, showing  that the closed-loop system is well-posed and exponentially stable in the $H^1$ norm. The resulting output-feedback control law is remarkable, in the sense that both the controller and the output injection gains are formulated as a multiple integral whose kernel is a product of two factors. The first factor only has radial dependence and it is exactly the same kernel that appears when applying backstepping to one-dimensional reaction-diffusion equations. The second factor depends on both radius and angle and is closely related to the Poisson kernel in the $n$-ball, which is a function used to solve Laplace's problem in that same domain. This structure greatly simplifies the proof of $H^1$ stability. The interest of having a result in $n$ dimensions, beyond having a general formula useful for the more conventional 2-D and 3-D settings, lies in emerging applications which can be modelled by high-dimensional diffusion. For instance, the use of reaction-diffusion equations to model multi-agent systems~\cite{meurer} has found much success, as it allows exploiting PDE control methods in applications such as agent motion planning; in this context, higher-dimensional models might contribute to related problems such as control of social networks and opinion dynamics.

The backstepping method has become ubiquitous in PDE control, as a powerful and elegant tool with many other applications including, among others, flow control~\cite{vazquez,vazquez-coron}, crystal growth processes~\cite{dubljevic}, nonlinear PDEs~\cite{vazquez2}, coupled parabolic systems~\cite{orlov}, hyperbolic 1-D systems~\cite{vazquez-nonlinear,florent,krstic3,Vazquez2014}, adaptive control~\cite{krstic4}, wave equations~\cite{krstic2}, and delays~\cite{krstic5}. Other design methods are also applicable to the geometry considered in this paper (see for instace~\cite{triggiani},~\cite{meurer2} or~\cite{Barbu}). Also, there have been specific results on disk- or spherical-shaped domains, such as~\cite{Prieur} and~\cite{scott}, however these works assume perfect symmetry of initial conditions which allows to consider only radial variations, thus simplifying the problem considerably.

The structure of the paper is as follows. In Section~\ref{sec-plant} we introduce the problem and state our main result. We explain the basis of our design method (spherical harmonics) in Section~\ref{sec-ultra} and explicitly find the control kernels in Section~\ref{sec-design}. The observer is dealt with in Section~\ref{sect-observer}. Next, we prove the closed-loop stability in Section~\ref{sect-stability}. We conclude the paper with some remarks in Section~\ref{sec-conclusions}.

\section{Main result}\label{sec-plant}
Consider the following constant-coefficient reaction-diffusion system in an $n$-dimensional ball of radius $R$:
\begin{equation}\label{eqn-ndim-rectangular}
\frac{\partial u}{\partial t}=\epsilon \left(\frac{\partial^2 u}{\partial x_1^2}+\frac{\partial^2 u}{\partial x_2^2}+\hdots+\frac{\partial^2 u}{\partial x_n^2}\right)+\lambda u=\epsilon \bigtriangleup_n u+\lambda u,
\end{equation}
where $\epsilon,\lambda$ are positive numbers, $u=u(t,\vec x)$, with $\vec x=[x_1,x_2,\hdots,x_n]^T$, is the state variable, evolving for $t>0$ in the $n$-ball $B^n(R)$ defined as
\begin{equation}
B^n(R)=\left\{\vec x\in \mathbb R^n:\Vert \vec x \Vert \leq R\right\},
\end{equation}
with boundary conditions on the boundary of $B^n(R)$, which is the $(n-1)$-sphere $S^{n-1}(R)$ defined as
\begin{equation}
S^{n-1}(R)=\left\{\vec x \in \mathbb R^n:\Vert \vec x \Vert = R\right\}.
\end{equation}
The boundary condition is assumed to be of Dirichlet type,
\begin{equation}\label{eqn-ndim-bc}
u(t,\vec x)\Big|_{\vec x\in S^{n-1}(R)}=U(t,\vec x),
\end{equation}
where $U(t,\vec x)$ is the actuation. On the other hand the measurement $y(t,\vec x)$ is defined as
\begin{equation}\label{eqn-measurement}
y(t,\vec x)=\partial_r u(t,\vec x)\Big|_{\vec x\in S^{n-1}(R)},
\end{equation}
where $\partial_r$ denotes the derivative in the radial direction (normal to the $(n-1)$-sphere), which is defined as $\partial_r u(t,\vec x)=\vec \nabla u \cdot \frac{\vec x}{\Vert \vec x \Vert}$.

Define now the $L^2$ norm and $H^1$ norm of a scalar function in the $n$-Ball as
\begin{equation}\label{eqn-defL2}
\Vert f \Vert_{L^2}=\left( \int_{B^n(R)}f^2(\vec x)
d\vec x\right)^{1/2},\quad \Vert f \Vert_{H^1}=\left(\Vert f \Vert_{L^2}^2+ \int_{B^n(R)} \Vert \vec \nabla f (\vec x) \Vert ^2
d\vec x\right)^{1/2}
\end{equation}
We denote simply by $L^2$ (resp. $H^1$) the space of square-integrable functions (resp. of functions with square-integral gradient) over the $n$-ball of radius $R$. Finally denote by $H^1_0$ the space of $H^1$ functions vanishing at the boundary (in the usual sense of traces, see e.g.~\cite[p.259]{evans}).

The following theorem was proved in~\cite{disk2}.
\begin{theorem}\label{th-main}
Consider (\ref{eqn-ndim-rectangular}) and (\ref{eqn-ndim-bc}) with initial conditions $u_0(\vec x)$ and the following (explicit) full-state feedback law for $U$:
\begin{eqnarray}
 U&=&- \frac{1}{\mathrm{Area}(S^{n-1}) }\sqrt{\frac{\lambda}{\epsilon}} \int_{B^n(R)}  \mathrm{I}_1\left[\sqrt{\frac{\lambda}{\epsilon}(R^2-\Vert \vec\xi \Vert ^2)}\right]  
 \frac{\sqrt{R^2-\Vert \vec \xi \Vert ^2}}{\Vert \vec x - \vec \xi \Vert^n} u(t,\vec \xi) d \vec\xi,\quad \label{control}
\end{eqnarray}
where the integral in (\ref{control}) is extended over the whole $n$-ball of radius $R$ and $\vec x\in S^{n-1}(R)$. In (\ref{control}), $\mathrm{I}_1$ is the first-order modified Bessel function of the first kind, and \begin{equation}\label{eqn-surf}
\mathrm{Area}(S^{n-1})=\frac{2 \pi^{\frac{n}{2}}}{\Gamma\left(\frac{n}{2}\right)}
\end{equation}
 is the ``surface area'' of the unit $(n-1)$-sphere, with $\Gamma$ being the Euler Gamma function (see~\cite{abramowitz}, p.253).
Assume in addition that $u_0\in L^2$. Then system (\ref{eqn-ndim-rectangular}),(\ref{eqn-ndim-bc}) has a unique $L^2$ solution, and the equilibrium profile $u\equiv 0$ is exponentially stable in the $L^2$ norm, i.e., there exists $c_1,c_2>0$ such that
\begin{equation}
\Vert u(t,\cdot) \Vert_{L^2} \leq c_1 \mathrm{e}^{-c_2t} \Vert u_0 \Vert_{L^2}.
\end{equation}
\end{theorem}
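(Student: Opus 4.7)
The plan is to apply the backstepping method after decoupling (\ref{eqn-ndim-rectangular}) into a countable family of 1-D reaction-diffusion equations via spherical harmonics. In ultraspherical coordinates, the Laplacian splits as $\Delta_n = \partial_r^2+\frac{n-1}{r}\partial_r+\frac{1}{r^2}\Delta_{S^{n-1}}$. Expanding $u(t,\vec x)=\sum_{l,m} u_{l,m}(t,r)Y_{l,m}(\theta)$ in an orthonormal basis of spherical harmonics (with $\Delta_{S^{n-1}}Y_{l,m} = -l(l+n-2)Y_{l,m}$), the system reduces to an infinite family of 1-D reaction-diffusion equations on $(0,R)$ for the coefficients $u_{l,m}(t,r)$, each equipped with a regularity condition at $r=0$ and a Dirichlet datum at $r=R$ given by the harmonic expansion of $U$.

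For each mode I would introduce a Volterra-type backstepping transformation $w_{l,m}(t,r)=u_{l,m}(t,r)-\int_0^r K_l(r,\rho)u_{l,m}(t,\rho)\rho^{n-1}d\rho$ mapping the plant mode to a stable target mode, namely the same equation with the destabilizing reaction $\lambda u$ removed. The kernel $K_l$ is required to solve a Klein-Gordon-type PDE on the triangle $\{0\le\rho\le r\le R\}$, which after an appropriate radial rescaling absorbing the $1/r^2$ singularity can be solved explicitly by successive approximations; the solution factorizes as the classical 1-D backstepping kernel (a function of $R^2-\rho^2$ involving $\mathrm{I}_1$) multiplied by an $l$-dependent weight. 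Reconstructing the physical-space control $U(t,\vec x)=\sum_{l,m} U_{l,m}(t)Y_{l,m}(\theta)$ with $U_{l,m}(t)=\int_0^R K_l(R,\rho)u_{l,m}(t,\rho)\rho^{n-1}d\rho$, and recognizing that the $l$-dependent weights form the coefficients in the Gegenbauer expansion of the Poisson kernel of the $n$-ball, collapses the double sum plus integral into the single integral (\ref{control}) in which $\Vert\vec x-\vec\xi\Vert^{-n}$ is precisely the boundary Poisson-kernel factor.

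Stability then follows by standard energy estimates on the target system: each modal target $w_{l,m}$ is dissipative on $(0,R)$ with decay rate at least $\epsilon\pi^2/R^2$ independent of $l$, so summation via Parseval yields exponential $L^2$ decay of $w$ on $B^n(R)$, which transfers to $u$ via the bounded invertibility of the Volterra transformation (uniform in $l$ by classical Volterra estimates). Well-posedness follows from the same modal decomposition. The principal obstacle is the apparent $1/r^2$ singularity in each modal equation and in the associated kernel PDE: one must verify that the chosen radial rescaling produces a kernel regular up to the origin, and that both the series defining $U$ and the one inverting the transformation converge uniformly enough on $B^n(R)$ to legitimize the exchange of summation and integration in the derivation of (\ref{control}); once this is handled, the compact Poisson-kernel structure of (\ref{control}) emerges naturally.
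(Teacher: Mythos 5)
Your proposal follows essentially the same route as the paper (and the cited reference \cite{disk2}, where Theorem~\ref{th-main} is actually proved): expansion in spherical harmonics, modal Volterra backstepping with an explicit kernel factoring as the classical 1-D reaction-diffusion kernel times $(\rho/r)^{l+n-2}$, resummation via the addition theorem and the Poisson identity for the $n$-dimensional Legendre (Gegenbauer) polynomials, and transfer of the target system's uniform-in-$l$ decay back through the uniformly bounded inverse transformation. The only cosmetic differences are that the paper obtains the kernel in closed form by reducing to Bessel's modified equation rather than by successive approximations, and carries out the final energy estimate in physical space rather than mode by mode; both variants are equivalent.
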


In Theorem~\ref{th-main}, feedback law (\ref{control}) assumes that the full state is known. This paper extends this result to the situation where only boundary measurements are used, by employing an observer. Denote by $\hat u$ the state of the observer (which approximates $u$). Then, $\hat u$ is computed as the solution of the following PDE
\begin{eqnarray}
\hat u_t&=& \epsilon \bigtriangleup_n \hat u+\lambda \hat u
- \frac{\sqrt{ \lambda \epsilon} }{\mathrm{Area}(S^{n-1}) }   \mathrm{I}_1\left[\sqrt{\frac{\lambda}{\epsilon}(R^2-\Vert \vec x \Vert ^2)} \right]  \sqrt{R^2-\Vert \vec x \Vert ^2}  \int_{S^{n-1}(R)}  \frac{y(t,\vec \xi)-\hat u_r(t,\vec \xi)}{\Vert \vec x - \vec \xi \Vert^n}
d\vec \xi,\label{eqn-ndim-obs-rect-th}
\end{eqnarray}
with boundary conditions
\begin{equation}\label{eqn-ndim-bc-obs}
\hat u(t,\vec x)\Big|_{\vec x\in S^{n-1}(R)}=U(t,\vec x).
\end{equation}
Notice that (\ref{eqn-ndim-obs-rect-th})--(\ref{eqn-ndim-bc-obs}) is a copy of (\ref{eqn-ndim-rectangular})--(\ref{eqn-ndim-bc}) with additional output injection in the right-hand side of (\ref{eqn-ndim-obs-rect-th}). The following result extends Theorem~\ref{th-main} by using the observer states in the feedback law (\ref{control}).
\begin{theorem}\label{th-main2}
Consider (\ref{eqn-ndim-rectangular})--(\ref{eqn-ndim-bc}) and (\ref{eqn-ndim-obs-rect-th})--(\ref{eqn-ndim-bc-obs}) with initial conditions $u_0(\vec x)$ and $\hat u_0(\vec x$) respectively, and the the following (explicit) full-state feedback law for $U$:
\begin{eqnarray}
 U&=&- \frac{1}{\mathrm{Area}(S^{n-1}) }\sqrt{\frac{\lambda}{\epsilon}} \int_{B^n(R)}  \mathrm{I}_1\left[\sqrt{\frac{\lambda}{\epsilon}(R^2-\Vert \vec\xi \Vert ^2)}\right]  
 \frac{\sqrt{R^2-\Vert \vec \xi \Vert ^2}}{\Vert \vec x - \vec \xi \Vert^n} \hat u(t,\vec \xi) d \vec\xi.\quad \label{control-obs}
\end{eqnarray}
Assume in addition that $u_0\in H^1_0$ and $\hat u_0\equiv 0$. Then the augmented system ($u,\hat u$) has an unique $H^1$ solution, and the equilibrium profile $u,\hat u \equiv 0$ is exponentially stable in the $H^1$ norm, i.e., there exists $c_1,c_2>0$ such that
\begin{equation}
\Vert u(t,\cdot) \Vert_{H^1}+\Vert \hat u(t,\cdot) \Vert_{H^1} \leq c_1 \mathrm{e}^{-c_2t} \Vert u_0 \Vert_{H^1}.
\end{equation}
\end{theorem}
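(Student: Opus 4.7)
My plan is to exploit the cascade structure of the closed-loop system via a separation argument. Define the observer error $\tilde u := u - \hat u$; subtracting (\ref{eqn-ndim-obs-rect-th}) from (\ref{eqn-ndim-rectangular}) yields an autonomous PDE for $\tilde u$ satisfying the homogeneous Dirichlet condition $\tilde u|_{S^{n-1}(R)} = 0$ (since $u$ and $\hat u$ share the same actuation $U$), containing only an output-injection correction built from $\tilde u_r|_{S^{n-1}(R)}$. Because the error dynamics do not depend on $\hat u$, I would first establish exponential decay of $\tilde u$, then treat $\hat u$, whose dynamics obey the ideal backstepping feedback $U(\hat u)$ perturbed by a boundary forcing due to $\tilde u$.

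The first technical step is the harmonic decomposition of Section~\ref{sec-ultra}. Expanding $\tilde u(t,\vec x) = \sum_{m,j} \tilde u_{m,j}(t,r) Y_{m,j}(\theta)$ and noting that the angular kernel $1/\Vert \vec x - \vec \xi \Vert^n$ appearing in (\ref{eqn-ndim-obs-rect-th}) is, up to normalization, the Poisson kernel of the $n$-ball, the projection onto the $m$-th harmonic reduces the error equation to a decoupled 1-D reaction-diffusion equation on $[0,R]$ whose observer gain is exactly the classical 1-D backstepping kernel of~\cite{krstic}. Hence an invertible Volterra transformation $\tilde w_{m,j}(r) = \tilde u_{m,j}(r) - \int_0^r P_m(r,\rho)\,\tilde u_{m,j}(\rho)\,d\rho$ maps each harmonic of the error into a stable heat target with exponential decay rate uniform in $(m,j)$, which sums via Parseval to $L^2$ decay of $\tilde u$. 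To upgrade to $H^1$ decay I would invoke parabolic smoothing on the target equation and propagate the bound back via the bounded inverse Volterra transform, using $u_0 \in H^1_0$ and $\hat u_0 \equiv 0$, so that $\tilde u_0 = u_0 \in H^1_0$.

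Once $\Vert \tilde u(t,\cdot) \Vert_{H^1}$ is known to decay exponentially, I would reapply the harmonic backstepping transformation that proved Theorem~\ref{th-main} to the observer equation: each $\hat u_{m,j}$ is mapped to a stable target heat equation with a boundary-type forcing proportional to $\tilde u_{m,j}$, so a Gronwall argument yields exponential $H^1$ decay of $\hat u$, and hence of $u = \hat u + \tilde u$. Well-posedness in $H^1$ follows from harmonic-wise existence/uniqueness of strong solutions combined with the uniform-in-$m$ operator bounds arising from the Volterra transforms.

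The main obstacle will be ensuring that the per-harmonic estimates sum cleanly in $H^1$ despite the apparent singularity of the ultraspherical coordinates at $r=0$ and the $m$-dependence of the Volterra kernels $P_m$. The key ingredient is the product structure of the physical-space kernel in (\ref{eqn-ndim-obs-rect-th})---a radial Bessel factor multiplied by a Poisson-type angular factor---which is precisely what was exploited in the proof of Theorem~\ref{th-main}: this structure guarantees that the physical-space control and injection operators are bounded on the appropriate Sobolev spaces, bypassing the need to track $m$-by-$m$ constants and directly closing the $H^1$ energy estimate for the coupled $(u,\hat u)$ system.
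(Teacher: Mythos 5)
Your overall architecture coincides with the paper's: split into the error $\tilde u=u-\hat u$ and the observer state, map each through a backstepping transformation (an observer-type transform for $\tilde u$, the control transform of Theorem~\ref{th-main} for $\hat u$) to a cascade of stable heat equations, prove decay there, and pull the estimate back through the transformations, whose $H^1$-boundedness and invertibility rest on the product ``1-D Bessel factor $\times$ Poisson-type factor'' structure of the kernels (this is exactly Proposition~\ref{prop-hnorms}, proved harmonic by harmonic with constants uniform in $l,m$). One minor slip: the observer Volterra transform must be anchored at the measured boundary, i.e.\ upper-triangular, $\tilde u_{lm}=\tilde w_{lm}-\int_r^R P^n_{lm}(r,\rho)\tilde w_{lm}(\rho)\,d\rho$, not $\int_0^r$ as you wrote; with the lower-triangular form the kernel equations cannot produce the output-injection term $p^n_{lm}(r)\,\partial_r\tilde u_{lm}(t,R)$.

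The substantive gap is in your second stage. You describe the forcing of the $\hat u$ equation as ``proportional to $\tilde u_{m,j}$'' and propose to close with Gronwall using the exponential decay of $\Vert\tilde u\Vert_{H^1}$. But the output injection is driven by the boundary trace of the radial \emph{derivative}, $\partial_r\tilde w(t,\cdot)\big|_{S^{n-1}(R)}$, and this trace is not controlled pointwise in time by $\Vert\tilde w(t,\cdot)\Vert_{H^1}$: by the trace theorem it requires $\Vert\tilde w_r\Vert_{H^1}$, i.e.\ essentially $\Vert\bigtriangleup_n\tilde w\Vert_{L^2}$, which is one derivative more than the exponential estimate you have in hand. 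A naive Gronwall argument with an ``exponentially decaying input'' therefore does not close. The paper resolves this by a coupled Lyapunov function $V_5=V_1+V_2+\tfrac{\epsilon}{K_F}(V_3+V_4)$, in which the dissipation $-\epsilon\Vert\bigtriangleup_n\tilde w\Vert_{L^2}^2$ produced by the gradient energy of the error absorbs the trace term $K_F\Vert\bigtriangleup_n\tilde w\Vert_{L^2}^2$ arising in $\dot V_3+\dot V_4$ (after the trace theorem and the bound $\Vert\tilde w_r\Vert_{H^1}^2\le K_V\Vert\bigtriangleup_n\tilde w\Vert_{L^2}^2$ for functions vanishing on the boundary). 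You would need either this weighted-Lyapunov device or an explicit $L^2$-in-time $H^2$-in-space (parabolic smoothing) bound on $\tilde w$ to make your step rigorous; your passing appeal to parabolic smoothing is the right instinct, but it must be deployed on the forcing term of the $\hat u$ equation, not merely to upgrade the decay of $\tilde u$ from $L^2$ to $H^1$.
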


Theorem~\ref{th-main2} assumes $H^1_0$ initial conditions for $u$ (which would be the natural open-loop boundary condition verifying the $0$-th order compatibility conditions, see e.g.~\cite[p.365]{evans}), and identically zero initial conditions for $\hat u$. It will be seen that this guarantees that the $0$-th order compatibility conditions are verified for the augmented system. Other combinations of initial conditions are possible if the $0$-th order compatibility conditions are verified. If necessary, control law (\ref{control-obs}) could be modified by adding extra decaying terms to enforce the satisfaction of the compatibility conditions (see e.g.~\cite{vazquez-nonlinear}).

In the next sections we sketch the proof of the result. The basic tool used to design the controller and observer is the theory of spherical harmonics, which is briefly reviewed in Section~\ref{sec-ultra}. Then, in Section~\ref{sec-design} we explain the rationale behind the method used for reaching control law (\ref{control}), and  in Section~\ref{sect-observer} we construct the observer (\ref{eqn-ndim-obs-rect-th})--(\ref{eqn-ndim-bc-obs}). Finally Section~\ref{sect-stability} contains the proof of stability and well-posedness for the closed loop augmented system.

\section{Ultraspherical coordinates and Spherical harmonics}\label{sec-ultra}

Equation (\ref{eqn-ndim-rectangular}) can be written in $n$-dimensional spherical coordinates, also known as ultraspherical coordinates (see~\cite{atkinson}, p. 93), which consist of one radial coordinate and $n-1$ angular coordinates, namely $(r,\theta_1,\theta_2,\hdots,\theta_{n-1})$, where $r\in[0,R]$ is the radial coordinate, and the angular coordinates are $\theta_1 \in[0,2\pi]$ and $\theta_i\in[0,\pi]$ for $2\leq i\leq n-1$. Using these coordinates, the rectangular coordinates are
\begin{eqnarray}
x_1&=&r\cos \theta_1 \sin \theta_2 \sin \theta_3 \hdots \sin\theta_{n-1},\\
x_2&=&r\sin \theta_1 \sin \theta_2 \sin \theta_3 \hdots \sin\theta_{n-1},\\
&\vdots & \nonumber \\
x_{n-1}&=&r\cos \theta_{n-2} \sin\theta_{n-1},\\
x_n&=&r\cos \theta_{n-1}.
\end{eqnarray}
For instance the usual spherical coordinates for the 3-ball (sphere) are
$x_1=r\cos \theta_1\sin \theta_2$, $x_2=r\sin \theta_1\sin \theta_2$, and $x_3=r \cos \theta_2$.

Following~\cite{atkinson} (p. 94), the  definition of the Laplacian $\bigtriangleup_n$ in ultraspherical coordinates is as follows:
\begin{equation}
\bigtriangleup_n=\frac{1}{r^{n-1}}\frac{\partial}{\partial r} \left(r^{n-1} \frac{\partial}{\partial r} \right) +\frac{1}{r^2} \bigtriangleup^*_{n-1}
\end{equation}
where $\bigtriangleup^*_{n-1}$ is called the Laplace-Beltrami operator and represents the Laplacian over the $(n-1)$-sphere. Its definition is recursive, as follows:
\begin{eqnarray}
\bigtriangleup^*_{1}&=&\frac{\partial^2}{\partial \theta_1^2}, \quad
\bigtriangleup^*_{n}= \frac{1}{\sin^{n-1} \theta_n} \frac{\partial}{\partial \theta_{n}}
  \left(\sin^{n-1} \theta_n \frac{\partial}{\partial  \theta_{n}} \right) +\frac{\bigtriangleup^*_{n-1}}{\sin^{2} \theta_n}, \qquad
\end{eqnarray}
As example, we show $\bigtriangleup^*_{2}$:
\begin{eqnarray}\label{eqn-laplacians}
\bigtriangleup^*_{2}&=&  \frac{1}{\sin \theta_2} \frac{\partial}{\partial \theta_{2}}
 \left(\sin \theta_2 \frac{\partial}{\partial  \theta_{2}} \right) +\frac{1}{\sin^{2} \theta_2} \frac{\partial^2}{\partial \theta_1^2}.
\end{eqnarray}
Denote, for simplicity, $\vec \theta=[\theta_1,\hdots,\theta_{n-1}]^T$.
Thus, Equation (\ref{eqn-ndim-rectangular})  can be written in ultraspherical coordinates as follows
\begin{eqnarray}\label{eqn-ndim-spherical}
\partial_t u&=& \frac{\epsilon}{r^{n-1}}\partial_r \left(r^{n-1} \partial_r u \right) +\frac{1}{r^2} \bigtriangleup^*_{n-1} u+\lambda u,
\end{eqnarray}
with boundary conditions
\begin{equation}\label{eqn-ndim-spherical-bc}
u(t,R,\vec \theta\,)=U(t,\vec \theta\,),
\end{equation}
and measurement
\begin{equation}\label{eqn-ndim-spherical-obs}
y(t,\vec \theta\,)=u_r(t,R,\vec \theta\,).
\end{equation}
In what follows $U$ and $y$ will be written without arguments for the sake of simplicity.

\subsection{Expansion in Spherical Harmonics}\label{sect-expansion}

To handle the angular dependency in (\ref{eqn-ndim-spherical}), we expand both the  $u$ and $U$ using a (complex-valued) Fourier-Laplace series of Spherical Harmonics\footnote{Spherical harmonics were introduced by Laplace to solve the homonymous equation and have been widely used since, particularly in geodesics, orbital mechanics, electromagnetism and computer graphics. A very complete treatment on the subject can be found in~\cite{atkinson}.} for the $n$-ball:
\begin{eqnarray}\label{eqn-expand}
u(t,r,\vec \theta)&=&\sum_{l=0}^{l=\infty} \sum_{m=0}^{m=N(l,n)} u^{m}_{l}(r,t) 
Y_{lm}^n(\vec \theta),\quad\\
U(t,\vec \theta)&=&\sum_{l=0}^{l=\infty}  \sum_{m=0}^{m=N(l,n)}U^{m}_{l}(t)  
Y_{lm}^n(\vec \theta),\quad\label{eqn-expand2}
\end{eqnarray}
where $N(l,n)$ is the number of (linearly independent) $n$-dimensional spherical harmonics of degree $l$, given by $N(0,n)=1$ (representing the mean value over the $n$-ball) and, for $l>0$, 
\begin{equation}
N(l,n)=\frac{2l+n-2}{l}\left( \begin{array}{c} l+n-3 \\ l-1 \end{array} \right),
\end{equation}
with $Y_{lm}^n$ being the $m$-th $n$-dimensional spherical harmonic of degree $l$. The coefficients in the series of (\ref{eqn-expand})--(\ref{eqn-expand2}) are possibly complex-valued and defined as
\begin{eqnarray}\label{eqn-harm-coef}
u^{m}_{l}(r,t)&=& \int_{0}^{\pi} \hdots   \int_0^{\pi} \int_0^{2\pi} u (t,r,\vec \theta) \bar Y_{lm}^n(\vec \theta)  
\sin^{n-2} \theta_{n-1}
 \sin^{n-3} \theta_{n-2} \hdots \sin \theta_2 d\vec \theta,\qquad \\
U^{m}_{l}(t)&=& \int_{0}^{\pi} \hdots   \int_0^{\pi}\int_0^{2\pi}   U (t,\vec \theta) \bar Y_{lm}^n(\vec \theta)  
\sin^{n-2} \theta_{n-1}
 \sin^{n-3} \theta_{n-2} \hdots \sin \theta_2 d\vec \theta,\qquad
\end{eqnarray}
where $\bar Y_{lm}^n$ represent the complex conjugate of $Y_{lm}^n$, and the product of sines inside the integral represent the $(n-1)$-sphere ``area'' element. The spherical harmonics $Y_{lm}^n$ can be explicitly defined in terms of the associated Legendre functions, but their explicit expression is complicated and
%
not really necessary when dealing with a constant-coefficient system ($\lambda$ constant in (\ref{eqn-ndim-rectangular})). Only their properties will be used in what follows.

The following important property holds~\cite[p. 97]{atkinson}, which means that the $n$-dimensional spherical harmonics are eigenfunctions of the Laplacian over the $(n-1)$-sphere:
\begin{equation}\label{eqn-eigen}
\bigtriangleup^*_{n-1}  Y^n_{lm}=-l(l+n-2) Y_{lm}^n.
\end{equation}
Using (\ref{eqn-eigen}), developing all the terms of (\ref{eqn-ndim-spherical}) in spherical harmonics and using the fact that the harmonics are linearly independent of each other, we obtain that 
each coefficient $u^{m}_{l}(t,r)$, for $l\in\mathbb{N}$ and $0\leq m \leq N(l,n)$, verifies the following independent 1-D reaction-diffusion equation:
\begin{equation}\label{eqn-un}
\partial_t u^{m}_{l}=\frac{\epsilon}{r^{n-1}} \partial_r \left( r^{n-1} \partial_r u^{m}_{l} \right)-l(l+n-2) \frac{\epsilon}{r^2} u^{m}_{l}+\lambda u^{m}_{l},
\end{equation}
evolving in $r \in[0,R],\,t>0$, with  boundary conditions 
\begin{eqnarray}\label{eqn-bcun}
u^{m}_{l}(t,R)&=&U^{m}_{l}(t),
\end{eqnarray}
and measurement
\begin{eqnarray}\label{eqn-measurementun}
y^{m}_{l}(t)&=&\partial_r u^{m}_{l}(t,R),
\end{eqnarray}

Thanks to the fact that the coefficients are constant and thus independent of the angular coordinates, the equations are not coupled and thus we can independently design each $U^{m}_{l}$ and later assemble all of the them using (\ref{eqn-expand2}) to find an expression for $U$.

\section{Full-state control law design}\label{sec-design}
In this section we review how the full-state feedback law (\ref{control}) has been constructed, since it is applied in Theorem~\ref{th-main2} with the observer states. Starting from the representation in ultraspherical coordinates (\ref{eqn-ndim-spherical}), we stabilize each  harmonic independently by using the backstepping method. Finally, we put together all the harmonics using (\ref{eqn-expand2}), thus reconstructing the feedback law in physical space.

\subsection{Backstepping transformation}\label{sect-back}
Our approach to design $U^{m}_{l}(t)$ is to seek a mapping that transforms (\ref{eqn-un}) into the following target system
\begin{equation}\label{eqn-wn}
\partial_t w^{m}_{l}=\frac{\epsilon}{r^{n-1}} \partial_r \left( r^{n-1} \partial_r w^{m}_{l} \right)-l(l+n-2) \frac{\epsilon}{r^2} w^{m}_{l},
\end{equation}
a stable heat equation (a negative reaction coefficient could also be added if a more rapid convergence rate is desired),
with boundary conditions
\begin{eqnarray}\label{eqn-bcwn}
w^{m}_{l}(t,R)&=&0.
\end{eqnarray}

The transformation is defined as follows:
\begin{equation}\label{eqn-transf}
w^{m}_{l}(t,r)=u^{m}_{l}(t,r)-\int_0^r K^n_{lm}(r,\rho) u^{m}_{l}(t,\rho) d\rho,
\end{equation}
and then $U^{m}_{l}(t)$ is found by substituting the transformation (\ref{eqn-transf}) in (\ref{eqn-un}) an using (\ref{eqn-bcwn}). 

To find the kernel equations we proceed, as usual in the backstepping method~\cite{krstic}, by substituting both the original and target systems in the transformation and integrating by parts when possible. After a rather tedious calculation, we obtain the following PDE that the kernel must verify:
\begin{eqnarray}
&& \frac{1}{r^{n-1}} \partial_r \left(r^{n-1}   \partial_r K^n_{lmr} \right)
-  \partial_\rho \left(\rho^{n-1} \partial_\rho  \left( \frac{K^n_{lm}}{\rho^{n-1}} \right) \right)
-l(l+n-2) \left(\frac{1}{r^2}- \frac{ 1}{\rho^2} \right)K^n_{lm}=\frac{\lambda}{\epsilon} K^n_{lm}.\label{eqn-Kn}
\end{eqnarray} 
Also, the following boundary conditions have to be verified
\begin{eqnarray}
0&=& \lambda  + \epsilon  K^n_{lmr} (r,r)  +\frac{ \epsilon}{r^{n-1}}  \frac{\partial}{\partial r} \left(r^{n-1}   K^n_{lm}(r,r) \right) 
+\epsilon 
\left( \frac{K^n_{lm} (r,\rho)}{\rho^{n-1}}\right)_\rho \bigg|_{\rho=r} r^{n-1},\label{eqn-bcKn1}\\ 
0&=&K^n_{lm}(r,0),\label{eqn-bcKn2}\\
0&=&\lim_{\rho\rightarrow 0} \left(\partial_\rho \left( \frac{K^n_{lm} (r,\rho)}{\rho^{n-1}}\right) \rho^{n-1}\right).\label{eqn-bcKn3}
\end{eqnarray}
Developing boundary condition (\ref{eqn-bcKn1}), we obtain
\begin{eqnarray}
0&=& \lambda  +  2 \epsilon \frac{d}{dr}\left(K^n_{lm}(r,r)\right),
\end{eqnarray}
which integrates to
\begin{eqnarray}\label{eqn-bcKnint}
K^n_{lm}(r,r) 
&=& -\frac{\lambda  r}{2 \epsilon },
\end{eqnarray}
where we have used (\ref{eqn-bcKn1}) at $r=0$ (i.e., $K^n_{lm} (0,0)=0$).
Finally (\ref{eqn-bcKn1}) can be written as
\begin{eqnarray}\label{eqn-bcrara}
0&=&\partial_\rho  K^n_{lm} (r,0)- (n-1) \lim_{\rho\rightarrow 0}  \frac{K^n_{lm} (r,\rho)}{\rho} .
\end{eqnarray}
However, from the second boundary condition one obtains $K^n_{lm} (r,0)=0$. Thus, if we assume that $K^n_{lm}(r,\rho)$ is differentiable in $\rho$ at $\rho=0$, obviously  implies $\lim_{\rho\rightarrow 0}  \frac{K^n_{lm} (r,\rho)}{\rho}=\partial_\rho  K^n_{lm} (r,0)$. Thus boundary condition (\ref{eqn-bcrara}) is automatically verified for $n=2$ and implies, for $n>2$,
\begin{equation}\label{eqn-bcKrho}
(n-2)\partial_\rho  K^n_{lm} (r,0)=0.
\end{equation}

\subsection{Explicitly solving the kernel equations}
\label{sec-explicit}
To solve (\ref{eqn-Kn}) with boundary conditions (\ref{eqn-bcKnint}), (\ref{eqn-bcKn2}) and (\ref{eqn-bcKrho}), consider the change of variables $K^n_{lm} (r,\rho)=G^n_{lm} (r,\rho)\rho\left(\frac{\rho}{r}\right)^{l+n-2}$. We end up with
\begin{eqnarray}\label{eqn-gn}
\frac{\lambda}{\epsilon} G^n_{lm} &=& \partial_{rr} G^n_{lm} +(3-n-2l)\frac{ \partial_r G^n_{lm}}{r}
 -\partial_{\rho\rho} G^n_{lm}+(1-n-2l)\frac{ \partial_\rho G^n_{lm}}{\rho},\label{eqn-Gn}
\end{eqnarray}
with only one boundary condition, since (\ref{eqn-bcKn2}) and (\ref{eqn-bcKrho}) are automatically verified:
\begin{eqnarray}
G^n_{lm}(r,r) 
&=& - \frac{\lambda}{2 \epsilon }.\label{eqn-Gnbc}
\end{eqnarray}
now assume a solution $G^n_{lm}(r,\rho)$ of the form 
\begin{equation}
G^n_{lm}(r,\rho)=\Phi\left(\left(\frac{\lambda}{\epsilon}(r^2-\rho^2)\right)^{1/2}\right),
\end{equation}
noticing that $\Phi$ is independent of $n$. Expressing the derivatives of $G_n$ in terms of $\Phi$ and replacing them in (\ref{eqn-Gn}) we find
\begin{eqnarray}
\frac{\lambda}{\epsilon} \Phi''+3\frac{\lambda}{\epsilon}\left(\frac{\lambda}{\epsilon}(r^2-\rho^2)\right)^{-1/2}\Phi'=\frac{\lambda}{\epsilon} \Phi\label{eqn-Phii}
\end{eqnarray}
with boundary condition $\Phi(0)= - \frac{\lambda}{2 \epsilon }$.
Denoting  $x=\left(\frac{\lambda}{\epsilon}(r^2-\rho^2)\right)^{1/2}$ and the derivatives with respect to $x$ with a dot, we can write (\ref{eqn-Phii}) as
\begin{eqnarray}
 \ddot \Phi(x)+\frac{3}{x}\dot \Phi(x)-\Phi(x)=0,
\end{eqnarray}
and finally calling $\Psi(x)=x \Phi(x)$, we obtain
\begin{eqnarray}
\left(\frac{\ddot \Psi}{x}-2\frac{\dot \Psi}{x^2}+2\frac{\Psi}{x^3}\right)+\frac{3}{x}\left(\frac{\dot \Psi}{x}-\frac{\Psi}{x^2}\right)-\frac{\Psi}{x}=0,
\end{eqnarray}
which cross-multiplied by $x^3$ gives
\begin{eqnarray}
x^2\ddot \Psi +x\dot \Psi-(1+x^2)\Psi=0,
\end{eqnarray}
which is Bessel's modified differential equation of order 1 (see~\cite{abramowitz}, p. 374, Sec. 9.6.1). The (bounded) solution is
\begin{equation}
\Psi(x)=C_1 \mathrm{I}_1(x),
\end{equation}
where $I_1$ is the modified Bessel function of order 1, and going backwards we obtain
\begin{equation}
\Phi(x)=C_1 \frac{\mathrm{I}_1(x)}{x}.
\end{equation}
noticing that
$\lim_{x \rightarrow 0} \frac{\mathrm{I}_1(x)}{x} = 1/2,$
we get 
$C_1=-\frac{\lambda}{\epsilon}$ (from the boundary condition at $x=0$).
Therefore, we obtain, by undoing the change of variables to recover $G_n$,
\begin{equation}
G^n_{lm}(r,\rho)=-\frac{\lambda}{\epsilon} \frac{\mathrm{I}_1\left[\sqrt{\frac{\lambda}{\epsilon}(r^2-\rho^2)}\right]}{\sqrt{\frac{\lambda}{\epsilon}(r^2-\rho^2)}},
\end{equation}
and therefore
\begin{equation}
K^n_{lm}(r,\rho)=-\rho\left(\frac{\rho}{r}\right)^{l+n-2}  \frac{\lambda}{\epsilon} \frac{\mathrm{I}_1\left[\sqrt{\frac{\lambda}{\epsilon}(r^2-\rho^2)}\right]}{\sqrt{\frac{\lambda}{\epsilon}(r^2-\rho^2)}}.\label{eqn-Knformula}
\end{equation}
\subsection{Finding the n-D backstepping kernel}
The backstepping transformation (\ref{eqn-transf}), written in real space by adding all the spherical harmonics, is
\begin{eqnarray}
w(t,r,\vec \theta)&=&\sum_{l=0}^{l=\infty} \sum_{m=0}^{m=N(l,n)} w^{m}_{l}(r,t) 
Y_{lm}^n(\vec \theta) \nonumber \\Ê&=&
\sum_{l=0}^{l=\infty} \sum_{m=0}^{m=N(l,n)} \left[ 
u^{m}_{l}(t,r)-\int_0^r K^n_{lm}(r,\rho) u^{m}_{l}(t,\rho) d\rho,
\right]
Y_{lm}^n(\vec \theta)
\nonumber \\Ê&=&
u(t,r,\vec \theta)
-\int_0^r
 \int_{0}^{\pi} \hdots   \int_0^{\pi} \int_0^{2\pi} K(r,\rho,\vec \theta, \vec \phi) 
u (t,\rho,\vec \phi)
\sin^{n-2} \phi_{n-1}  \hdots \sin  \phi_2 d \vec \phi,\quad \label{eqn-transf-real}
\end{eqnarray}
where we have used (\ref{eqn-harm-coef}). We have defined $K$ as a function of its harmonics $ K^n_{lm}$,
\begin{equation}\label{eqn-K-harm-coef}
 K(r,\rho,\vec \theta, \vec \phi) =  \sum_{l=0}^{l=\infty} \sum_{m=0}^{m=N(l,n)}  K^n_{lm}(r,\rho) Y^n_{lm}(\vec \theta) \bar Y_{lm}^n(\vec \phi)
\end{equation}
Using (\ref{eqn-Knformula}) in (\ref{eqn-K-harm-coef}), we obtain $K$ as
\begin{eqnarray}
 K &=&-\rho \frac{\lambda}{\epsilon} \frac{\mathrm{I}_1\left[\sqrt{\frac{\lambda}{\epsilon}(r^2-\rho^2)}\right]}{\sqrt{\frac{\lambda}{\epsilon}(r^2-\rho^2)}}  
 \label{eqn-compl}
 \sum_{l=0}^{\infty} \sum_{m=0}^{N(l,n)}  \left(\frac{\rho}{r}\right)^{l+n-2} Y^n_{lm}(\vec \theta) \bar Y_{lm}^n(\vec \phi).    \end{eqnarray}
 The inner sum of (\ref{eqn-compl}) can be computed by virtue of the Addition Theorem for Spherical Harmonics~\cite[p.21]{atkinson}:
\begin{equation}
\sum_{m=0}^{N(l,n)} Y^n_{lm}(\vec \theta) \bar Y_{lm}^n(\vec \phi)  =\frac{N(l,n)}{\mathrm{Area}(S^{n-1})} P_{l,n}(\cos \omega),
\end{equation}
where $P_{l,n}$ is the Legendre polynomial of degree $l$ in $n$ dimensions, $\mathrm{Area}(S^{n-1})$ is the surface area of the $(n-1)$-sphere (given by (\ref{eqn-surf})), and $\omega$ represents the geodesic distance between the points given by the angles $(\vec \theta)$ and $(\vec \phi)$ on the $(n-1)$-sphere, given by: 
\begin{eqnarray}
\omega&=&\cos^{-1}\left\{
\cos \phi_{n-1}\cos \theta_{n-1}+\sin \phi_{n-1} \sin \theta_{n-1}
\times \left[ \cos \phi_{n-2}\cos \theta_{n-2}+\sin \phi_{n-2} \sin \theta_{n-2}
\right.\right.\nonumber \\ && \times\left.\left.
\left[\hdots \left[\cos \phi_2 \cos \theta_2+\sin \phi_2 \sin \theta_2 \cos (\theta_1-\phi_1)\right]\hdots \right]\right]\right\}.\quad \nonumber
\end{eqnarray}
Therefore, the nested sum of (\ref{eqn-compl}) is reduced to a single sum as follows
\begin{eqnarray}
 \sum_{l=0}^{\infty} \sum_{m=0}^{N(l,n)}  \left(\frac{\rho}{r}\right)^{l+n-2} Y^n_{lm}(\vec\theta) \bar Y_{lm}^n(\vec\phi) 
  &=& \sum_{l=0}^{\infty}   \left(\frac{\rho}{r}\right)^{l+n-2} 
\frac{N(l,n)}{\mathrm{Area}(S^{n-1})} P_{l,n}(\cos \omega).
\end{eqnarray}
This last sum can also be computed by using the Poisson identity~\cite[p. 54]{atkinson}, which is given by
\begin{equation}
\sum_{l=0}^{\infty} N(l,n) s^{l}  P_{l,n}(t)=\frac{1-s^2}{\left(1+s^2-2st\right)^{n/2}}.
\end{equation}
Therefore
\begin{eqnarray}
 \sum_{l=0}^{l}   \left(\frac{\rho}{r}\right)^{l+n-2} 
\frac{N(l,n)}{\mathrm{Area}(S^{n-1})} P_{l,n}(\cos \omega)
&=&\frac{1}{\mathrm{Area}(S^{n-1})} \left(\frac{\rho}{r}\right)^{n-2} \frac{1-\left(\frac{\rho}{r}\right)^2}{\left(\left(\frac{\rho}{r}\right)^2-2\left(\frac{\rho}{r}\right)\cos\omega+1\right)^{n/2}} \nonumber \\
&=&\frac{\rho^{n-2}}{\mathrm{Area}(S^{n-1})}  \frac{r^2-\rho^2}{\left(r^2+\rho^2-2\rho r \cos\omega\right)^{n/2}},
\end{eqnarray}
which is a function closely related to the Poisson kernel for the $n$-ball (see~\cite{evans},p.41, a function that tends to a Dirac delta $\delta(\theta-\psi)$ when $r$ goes to $\rho$, and helps solve Laplace's equation).
Using the sum of the series in (\ref{eqn-compl}) the control kernel is
\begin{eqnarray}
 K(r,\rho,\vec\theta,\vec \phi) 
&=& 
\frac{-\rho^{n-1}}{\mathrm{Area}(S^{n-1})}\sqrt{\frac{\lambda}{\epsilon}} \mathrm{I}_1\left[\sqrt{\frac{\lambda}{\epsilon}(r^2-\rho^2)}\right]  \label{eqn-k-real}
\frac{\sqrt{r^2-\rho^2}}{\left(r^2+\rho^2-2\rho r \cos\omega\right)^{n/2}}. \quad \end{eqnarray}
The kernel and the transformation can be written in rectangular coordinates, by defining $\vec \xi$ from radius $\rho$ and ultraspherical coordinates $\vec \phi$. Then,
 the transformation (\ref{eqn-transf-real}) is written as
\begin{equation}
w(t,\vec x)=\mathcal{K}[u]=
u(t,\vec x)
-  \int_{B^n(\Vert \vec x\Vert)}K(\vec x, \vec \xi) 
  u (t,\vec \xi)
d\vec\xi,\label{eqn-transf-real-rect}
\end{equation}where 
the symbol $\mathcal{K}[u]$ will be used to refer to the transformation in a simple way.
The integral in (\ref{eqn-transf-real-rect}) is extended over the $n$-ball of radius $\Vert \vec x \Vert$,
and noting that $r^2= \vec x\cdot \vec x$, $\rho^2= \vec \xi \cdot \vec \xi$, $r\rho\cos \omega= \vec x \cdot \vec \xi$, and that $\rho^{n-1}$ is part of the volume element in the integral, the kernel is
\begin{equation}
K(\vec x, \vec \xi)=
- \frac{\sqrt{\frac{\lambda}{\epsilon}}   \mathrm{I}_1\left[\sqrt{\frac{\lambda}{\epsilon}(\Vert \vec x \Vert ^2-\Vert \vec\xi \Vert ^2)}\right]  }{\mathrm{Area}(S^{n-1}) }
 \frac{\sqrt{\Vert \vec x \Vert ^2-\Vert \vec \xi \Vert ^2}}{\Vert \vec x - \vec \xi \Vert^n}, \quad \label{control-kernel-rect}
 \end{equation}
and the control law (\ref{control}) is found by inserting (\ref{control-kernel-rect}) into (\ref{eqn-transf-real-rect}) and fixing $\vec x \in S^{n-1}(R)$ (thus $\Vert \vec x \Vert=R$).

\section{Observer design}\label{sect-observer}
Consider now the problem of designing an observer for (\ref{eqn-ndim-spherical})--(\ref{eqn-ndim-spherical-obs}). Working in spherical harmonics, we start from (\ref{eqn-un})--(\ref{eqn-measurementun}) and construct our observer as a copy of the plant plus an output injection term:
\begin{eqnarray}
\hat u_{lmt}&=&\frac{\epsilon}{r^{n-1}} \partial_r \left( r^{n-1}\partial_r\hat  u_{lm} \right)-l(l+n-2) \frac{\epsilon}{r^2} \hat  u_{lm}+\lambda \hat  u_{lm}+p_{lm}^n(r) (y_{lm}(t)-\partial_r \hat u_{lm}(t,R)),
\end{eqnarray}
with  boundary conditions 
\begin{eqnarray}
\hat u_{lm}(t,R)&=& U_{lm}(t).
\end{eqnarray}
We need to design the output injection gain $p_{lm}^n(r)$. Define the observer error as $\tilde u=u-\hat u$. The observer error dynamics are given by
\begin{eqnarray}\label{eqn-utilden}
\tilde u_{lmt}&=&\frac{\epsilon}{r^{n-1}} \partial_r \left( r^{n-1}\partial_r\tilde  u_{lm} \right)-l(l+n-2) \frac{\epsilon}{r^2} \tilde  u_{lm}+\lambda \tilde  u_{lm}-p_{lm}^n(r) \partial_r\tilde u_{lm}(t,R),
\end{eqnarray}
with  boundary conditions 
\begin{eqnarray}
\tilde u_{lm}(t,R)&=&0.
\end{eqnarray}
 Next we use the backstepping method to find a value of $p_{lm}^n(r)$ that guarantees convergence of $\tilde u$ to zero. This ensures that the observer estimates tend to the true state values. Our approach to design $p_{lm}^n(r)$ is to seek a mapping that transforms (\ref{eqn-utilden}) into the following target system
\begin{eqnarray}\label{eqn-wtilden}
\tilde w_{lmt}&=&\frac{\epsilon}{r^{n-1}} \partial_r \left( r^{n-1} \partial_r \tilde w_{lm} \right)-l(l+n-2) \frac{\epsilon}{r^2} \tilde w_{lm},
\end{eqnarray}
a stable heat equation (a negative reaction coefficient could also be added if a more rapid observer convergence rate is desired),
with boundary conditions
\begin{eqnarray}
\tilde w_{lm}(t,R)&=&0.
\end{eqnarray}

The transformation is defined as follows:
\begin{equation}\label{eqn-obstrans}
\tilde u_{lm}(t,r)=\tilde w_{lm}(t,r)-\int_r^RP^n_{lm}(r,\rho) \tilde w_{lm}(t,\rho) d\rho
\end{equation}
and then $p_{lm}^n(r)$ will be found from transformation kernel as an additional condition.

To find the kernel equations,  as in Section~\ref{sec-design} one has substitute the transformation into the original system and substitute the target systems, integrating by parts when possible. It is straightforward to obtain the following PDE that the kernel must verify:
\begin{eqnarray}\label{obs-kernel}
\frac{1}{r^{n-1}} \partial_r \left(r^{n-1}   \partial_r P^n_{lm} \right)
-  \partial_\rho \left(\rho^{n-1} \partial_\rho \left( \frac{P^n_{lm}}{\rho^{n-1}} \right)\right)
-l(l+n-2) \left(\frac{1}{r^2}- \frac{ 1}{\rho^2} \right)P^n_{lm}=-\frac{\lambda}{\epsilon} P^n_{lm}
\end{eqnarray}
In addition we find a value for the output injection gain kernel
\begin{equation}
p_{lm}^n(r)=\epsilon P^n_{lm}(r,R)  
\end{equation}

Also, the following boundary condition has to be verified
\begin{eqnarray}
0&=& \lambda  + \partial_r \epsilon  P^n_{lm} (r,r)  +\frac{ \epsilon}{r^{n-1}}  \frac{\partial}{\partial r} \left(r^{n-1}   P^n_{lm}(r,r) \right) 
+\epsilon \partial_\rho
\left( \frac{P^n_{lm} (r,\rho)}{\rho^{n-1}}\right) \bigg|_{\rho=r} r^{n-1} ,
\end{eqnarray}
which can be written as
\begin{eqnarray}
0&=& \lambda  +  \epsilon  \partial_r P^n_{lm} (r,r) +\epsilon \frac{d}{dr}\left(P^n_{lm}(r,r)\right) +(n-1)\frac{ \epsilon P^n_{lm}(r,r)  }{r}  +\epsilon \partial_\rho P^n_{lm}(r,r) - (n-1)\frac{ \epsilon P^n_{lm}(r,r)  }{r}.\quad
\end{eqnarray}
Operating, we obtain
\begin{eqnarray}
0&=& \lambda  +  2 \epsilon \frac{d}{dr}\left(P^n_{lm}(r,r)\right),
\end{eqnarray}
which integrates to
\begin{eqnarray}
P^n_{lm}(r,r) 
&=& - \frac{\lambda r P^n_{lm}(\rho,\rho) }{2 \epsilon }+C.
\end{eqnarray}
To obtain the constant $C$ and another boundary condition, we note that the PDE verified by both $\tilde u$ and $\tilde w$ has to have meaningful (non-singular) values at $r\rightarrow 0$. This implies
\begin{eqnarray}
\lim_{r\rightarrow 0} \left[(n-1) \partial_r \tilde w_{lm} -l(l+n-2) \frac{\tilde w_{lm}}{r}\right]&=&0,\\
\lim_{r\rightarrow 0} \left[(n-1) \partial_r \tilde u_{lm} -l(l+n-2) \frac{\tilde u_{lm}}{r}\right]&=&0.
\end{eqnarray}
From the transformation:
\begin{eqnarray}
&&(n-1) \partial_r \tilde u_{lm} -l(l+n-2) \frac{\tilde u_{lm}}{r} \nonumber \\
&=&(n-1) \partial_r \tilde w_{lm} -l(l+n-2) \frac{\tilde w_{lm}}{r}
-\int_r^R \left[(n-1) \partial_r P^n_{lm}(r,\rho)-l(l+n-2) \frac{P^n_{lm}(r,\rho)}{r} \right] \tilde w_{lm}(\rho) d\rho
\nonumber \\
&&
+  (n-1) P^n_{lm}(r,r)\tilde w_{lm}(t,\rho),
\end{eqnarray}
thus it is sufficient that $P^n_{lm}(0,\rho)=\partial_r P^n_{lm}(0,\rho)=0$

Thus the boundary conditions for the kernel equations become
\begin{eqnarray}
P^n_{lm}(0,\rho)&=&\partial_r P^n_{lm} (0,\rho)=0,\\
P^n_{lm}(r,r) 
&=& -\frac{\lambda r  }{2 \epsilon }.
\end{eqnarray}
It turns out the observer kernel equation can be transformed into the control kernel equation, therefore obtaining a similar explicit result. For this, define
\begin{equation}
\check{P}^n_{lm}(r,\rho)=\frac{\rho^{n-1}}{r^{n-1}} P^n_{lm}(\rho,r),
\end{equation}
and it can be verified that (\ref{obs-kernel}) is transformed into (\ref{eqn-Kn}), along with the boundary conditions. Thus $\check{P}^n_{lm}(r,\rho)=K^n_{lm}(r,\rho)$ and we find
\begin{equation}
{P}^n_{lm}(r,\rho)=\frac{r^{n-1}}{\rho^{n-1}} K^n_{lm}(\rho,r)=-\rho\left(\frac{\rho}{r}\right)^{l-1}  \frac{\lambda}{\epsilon} \frac{\mathrm{I}_1\left[\sqrt{\frac{\lambda}{\epsilon}(\rho^2-r^2)}\right]}{\sqrt{\frac{\lambda}{\epsilon}(\rho^2-r^2)}}.
\end{equation}
Summing as before all spherical harmonics leads to a  transformation kernel 
\begin{eqnarray}
P(r,\rho,\vec\theta,\vec \phi) 
&=& 
\frac{-\rho^{n-1}}{\mathrm{Area}(S^{n-1})}\sqrt{\frac{\lambda}{\epsilon}} \mathrm{I}_1\left[\sqrt{\frac{\lambda}{\epsilon}(\rho^2-r^2)}\right]  \label{eqn-p-real}
\frac{\sqrt{\rho^2-r^2}}{\left(r^2+\rho^2-2\rho r \cos\omega\right)^{n/2}}, \quad \end{eqnarray}
which would define the observer transformation in physical space as
\begin{equation}
\tilde u(t,\vec x)=\mathcal{P}[\tilde w]=
\tilde w(t,\vec x)
-  \int_{B^n(R)- B^n(\Vert \vec x\Vert)}P(\vec x, \vec \xi) 
  \tilde w (t,\vec \xi)
d\vec\xi,\label{eqn-obstransf-real-rect}
\end{equation}
where the integral in (\ref{eqn-obstransf-real-rect}) is extended to the $n$-ball from radius $R$ to radius $\Vert \vec x \Vert$.
Finally, using $p_{lm}^n(r)=\epsilon P^n_{lm}(r,R)$ and summing the spherical harmonics yields the physical-space operator in the right-hand side of (\ref{eqn-ndim-obs-rect-th}).

\section{Proof of closed-loop stability in the $H^1$ norm} \label{sect-stability}
We first remark that studying the augmented ($u,\hat u$) system is equivalent to studying the augmented ($\tilde u,\hat u$) system. 
To obtain the stability result of Theorem~\ref{th-main2} we  need three elements. We begin by obtaining the existence of an inverse transformation (for both control and observer transformations) that allows us to recover the original variable from the transformed variable. Then we relate the $H^1$ norm with spherical harmonics and show that both transformations are invertible maps from $H^1$ into $H^1$ (Proposition~\ref{prop-hnorms}). We continue by stating a well-posedness and stability result for the augmented ($\tilde w,\hat w$) system in physical space (Proposition~\ref{targetsystem}). Combining the two propositions, it is straightforward to construct the proof of Theorem~\ref{th-main2} by mapping the results for the target augmented system to the original augmented system.

\subsection{Invertibility of the transformations}
We start with the control transformation. Mimicking (\ref{eqn-transf}), we start by posing an inverse transform in the spherical harmonics space, as follows
\begin{equation}\label{eqn-invtransf}
u_{l}^{m}(t,r)=w_{l}^{m}(t,r)+\int_0^r L^n_{lm}(r,\rho) w_{l}^{m}(t,\rho) d\rho,
\end{equation}
and proceeding in the same fashion of Section~\ref{sect-back} we find the following kernel equations for $L^n_{lm}$:
\begin{eqnarray}
&&\frac{1}{r^{n-1}} \partial_r \left(r^{n-1}  \partial_r  L^n_{lm} \right)
-  \partial_\rho  \left(\rho^{n-1} \partial_\rho  \left( \frac{L^n_{lm}}{\rho^{n-1}} \right) \right)
-l(l+n-2) \left(\frac{1}{r^2}- \frac{ 1}{\rho^2} \right)L^n_{lm}=-\frac{\lambda}{\epsilon} L^n_{lm}.\label{eqn-Ln}
\end{eqnarray} 
with boundary conditions
\begin{eqnarray}
L^n_{lm}(r,0)&=&(n-2) \partial_\rho  L^n_{lm} (r,0)=0,\\
L^n_{lm}(r,r) 
&=&-\frac{\lambda r }{2 \epsilon }. \label{eqn-lnbc}
\end{eqnarray}
These equations are very similar to (\ref{eqn-Kn}) but substituting $\lambda$ by $-\lambda$ and changing the sign of the kernel. Thus we easily find the inverse transform by doing these substitutions in (\ref{eqn-Knformula}) as
\begin{equation}
L^n_{lm}(r,\rho)=-\rho\left(\frac{\rho}{r}\right)^{l+n-2} \frac{\lambda}{\epsilon} \frac{\mathrm{J}_1\left[\sqrt{\frac{\lambda}{\epsilon}(r^2-\rho^2)}\right]}{\sqrt{\frac{\lambda}{\epsilon}(r^2-\rho^2)}},\label{eqn-klnexpl}
\end{equation}
where $\mathrm{J}_1$ is the first-order Bessel function of the first kind.

Then, as in (\ref{eqn-transf-real-rect}), the transformation in physical space is
\begin{equation}
u(t,\vec x)=\mathcal{L}[ w]=
w(t,\vec x)
+  \int_{B^n(\Vert \vec x\Vert)}L(\vec x, \vec \xi) 
  w (t,\vec \xi)
d\vec\xi,\label{eqn-transf-inv-real-rect}
\end{equation}where the kernel $L$ is
\begin{equation}
L(\vec x, \vec \xi)=
-\frac{\sqrt{\frac{\lambda}{\epsilon}}   \mathrm{J}_1\left[\sqrt{\frac{\lambda}{\epsilon}(\Vert \vec x \Vert ^2-\Vert \vec\xi \Vert ^2)}\right]  }{\mathrm{Area}(S^{n-1}) }
 \frac{\sqrt{\Vert \vec x \Vert ^2-\Vert \vec \xi \Vert ^2}}{\Vert \vec x - \vec \xi \Vert^n}. \quad \label{control-invkernel-rect}
 \end{equation}
It is obvious that a very similar result can be achieved for the observer transformation, which we will define
\begin{equation}
\tilde w(t,\vec x)=\mathcal{R}[\tilde u]=
\tilde u(t,\vec x)
+  \int_{B^n(R)- B^n(\Vert \vec x\Vert)}R(\vec x, \vec \xi) 
  \tilde u (t,\vec \xi)
d\vec\xi,\label{eqn-obstransf-real-rect-inv}
\end{equation}
with the kernel $R$ being very similar in structure to $L$.
\subsection{Computing  norms using spherical harmonics}

The $L^2$ norm (\ref{eqn-defL2}) written in ultraspherical coordinates is
\begin{eqnarray}
\Vert f \Vert_{L^2}&=&\left( 
\int_0^R
 \int_{0}^{\pi} \hdots   \int_0^{\pi} \int_0^{2\pi} 
 f^2 (r,\vec \theta) 
\sin^{n-2} \theta_{n-1}  \hdots \sin  \theta_2 r^{n-1} d \vec \phi dr,
\right)^{1/2},\quad
\end{eqnarray}
and using the properties of spherical harmonics, if $f^{m}_{l}(r)$ are the harmonics of $f$,
then
\begin{eqnarray} \label{eqn-l2sph}
\Vert f \Vert_{L^2}&=&\left( 
\sum_{l=0}^{l=\infty} \sum_{m=0}^{m=N(l,n)}
\int_0^R
 \vert f^{m}_{l}(r)  \vert^2
r^{n-1} dr,
\right)^{1/2}.\qquad
\end{eqnarray}

Similarly, the $H^1$ norm (\ref{eqn-defL2}) written in ultraspherical coordinates is
\begin{eqnarray}
\Vert f \Vert_{H^1}^2&=&\Vert f \Vert_{L^2}^2+
\int_0^R
 \int_{0}^{\pi} \hdots   \int_0^{\pi} \int_0^{2\pi} 
\Vert \vec \nabla f(r,\vec \theta) \Vert^2 
\sin^{n-2} \theta_{n-1}  \hdots \sin  \theta_2 r^{n-1} d \vec \phi dr,
\quad
\end{eqnarray}
Now, noting that 
\begin{equation}
\vec \nabla f(r,\vec \theta)=\vec \xi \, \frac{\partial f}{\partial r}+\frac{1}{r} \vec \nabla^*_{n-1} f
\end{equation}
where (see~\cite[p. 90]{atkinson}) $\vec \xi$ is an unitary vector pointing in the radial direction at $(r,\vec \theta)$ and $\vec \nabla^*_{n-1}$ is the first-order Beltrami operator on the $n-1$ unitary sphere, we obtain
\begin{equation}
\Vert \vec \nabla f(r,\vec \theta) \Vert^2 =\left(\frac{\partial f}{\partial r}\right)^2+\frac{1}{r^2} \vec \nabla^*_{n-1} f \cdot  \vec \nabla^*_{n-1} f
\end{equation}
and therefore 
\begin{equation}
\Vert f \Vert_{H^1}^2=\Vert f \Vert_{L^2}+
\Vert \frac{\partial f}{\partial r} \Vert^2_{L^2} +
\int_0^R \frac{1}{r^2}  \left[
 \int_{0}^{\pi} \hdots   \int_0^{\pi} \int_0^{2\pi} 
\vec \nabla^*_{n-1} f \cdot  \vec \nabla^*_{n-1} f 
\sin^{n-2} \theta_{n-1}  \hdots \sin  \theta_2  d \vec \phi \right] r^{n-1} dr,
\end{equation}
and considering the expansion of $f$ in spherical harmonics, and the Green-Beltrami identity~\cite[p. 95]{atkinson}, we obtain
\begin{equation} \label{eqn-h1sph}
\Vert f \Vert_{H^1}^2=
\sum_{l=0}^{l=\infty} \sum_{m=0}^{m=N(l,n)} 
\int_0^R \left[\left(1+\frac{l(l+n-2)}{r^2}\right)
 \vert f^{m}_{l}(r)  \vert^2 +  \left|\frac{\partial f^{m}_{l}}{\partial r}  \right|^2
  \right]
r^{n-1} dr
.
\end{equation}

\subsection{The control and observer transformation as maps between functional spaces}
We next show that both the direct and inverse control and observer transformation transform $L^2$ (resp. $H^1$) functions back into $L^2$ (resp. $H^1$) functions.
\begin{proposition}\label{prop-hnorms}
Assume that the function $g(r,\vec x)$ is related to the function $f(r,\vec x)$ by means of the transformation $g=\mathcal{K}[f]$ and consequently $f$ is related to $g$ by $f=\mathcal{L}[g]$. Then:
\begin{equation}
\Vert g \Vert_{L^2} \leq C_{KL} \Vert f \Vert_{L^2},\, 
\Vert f \Vert_{L^2} \leq C_{LL} \Vert g \Vert_{L^2},\, 
\Vert g \Vert_{H^1} \leq C_{KH} \Vert f \Vert_{H^1},\, 
\Vert f \Vert_{H^1} \leq C_{LH} \Vert g \Vert_{H^1},\, 
\end{equation}
where constants $C_{KL},C_{LL},C_{KH},C_{LH}$ depending only on $R$, $\lambda$, $\epsilon$,  and $n$. 

Similarly, assume that the function $g(r,\vec x)$ is related to the function $f(r,\vec x)$ by means of the observer transformation $g=\mathcal{P}[f]$ and consequently $f$ is related to $g$ by $f=\mathcal{R}[g]$. Then:
\begin{equation}
\Vert g \Vert_{L^2} \leq C_{PL} \Vert f \Vert_{L^2},\, 
\Vert f \Vert_{L^2} \leq C_{RL} \Vert g \Vert_{L^2},\, 
\Vert g \Vert_{H^1} \leq C_{PH} \Vert f \Vert_{H^1},\, 
\Vert f \Vert_{H^1} \leq C_{RH} \Vert g \Vert_{H^1},\, 
\end{equation}
where constants $C_{PL},C_{RL},C_{PH},C_{RH}$ depending only on $R$, $\lambda$, $\epsilon$,  and $n$.
\end{proposition}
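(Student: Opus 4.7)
The plan is to reduce each of the eight inequalities to a uniform-in-$l$ bound on the scalar Volterra operator that implements the corresponding transformation on the spherical harmonic coefficients, and then to reassemble via Parseval using the decompositions (\ref{eqn-l2sph}) and (\ref{eqn-h1sph}). The control transformation (\ref{eqn-transf}) and its inverse (\ref{eqn-invtransf}) decouple into
\[
T^K_l f(r) := \int_0^r K^n_{lm}(r,\rho)\, f(\rho)\, d\rho, \qquad T^L_l f(r) := \int_0^r L^n_{lm}(r,\rho)\, f(\rho)\, d\rho,
\]
which depend only on $l$ and $n$ (not on $m$); analogous operators $T^P_l,\,T^R_l$ integrating over $[r,R]$ arise from the observer pair. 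The proposition is therefore equivalent to showing that each of these operators is bounded, uniformly in $l$, on $L^2(r^{n-1}dr)$ and on the weighted norm
\[
\Vert f\Vert_{l,H^1}^2 := \int_0^R \left[\left(1 + \frac{l(l+n-2)}{r^2}\right)|f|^2 + |\partial_r f|^2\right] r^{n-1}\, dr,
\]
which are exactly the summands appearing in (\ref{eqn-l2sph}) and (\ref{eqn-h1sph}).

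The first step is to extract pointwise kernel estimates from the explicit formulas. Using (\ref{eqn-Knformula}), the boundedness of $\mathrm{I}_1(x)/x$ and $\mathrm{I}_2(x)/x^2$ on $[0, R\sqrt{\lambda/\epsilon}]$, and the identity $\tfrac{d}{dx}(\mathrm{I}_1(x)/x)=\mathrm{I}_2(x)/x$, one obtains
\[
|K^n_{lm}(r,\rho)| \leq C_1 \rho \left(\frac{\rho}{r}\right)^{l+n-2}, \quad |\partial_r K^n_{lm}(r,\rho)| \leq C_2(l+n-2)\left(\frac{\rho}{r}\right)^{l+n-1} + C_3 \rho \left(\frac{\rho}{r}\right)^{l+n-2},
\]
uniformly on $0\leq\rho\leq r\leq R$ and in $l,m$, with constants depending only on $R,\lambda,\epsilon,n$. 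Identical bounds hold for $L^n_{lm}$ with $\mathrm{J}_1,\mathrm{J}_2$ in place of $\mathrm{I}_1,\mathrm{I}_2$, and mirror estimates on $\{r\leq\rho\leq R\}$ hold for the observer kernels $P^n_{lm}$ and $R^n_{lm}$ via the symmetry $\check P^n_{lm}(r,\rho) = (\rho/r)^{n-1} P^n_{lm}(\rho,r)$ already exploited in Section~\ref{sect-observer}. The geometric factor $(\rho/r)^{l+n-2}\leq 1$ is what supplies the decay in $l$ needed to make every subsequent estimate uniform.

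The second step controls the three summands of $\Vert T^K_l u\Vert_{l,H^1}^2$ one at a time. For the $L^2$ piece, Cauchy-Schwarz against $\rho^{n-1}d\rho$ combined with the identity $\int_0^r \rho^{3-n}(\rho/r)^{2(l+n-2)}d\rho = r^{4-n}/(n+2l)$ gives $|T^K_l u(r)|^2 \leq C r^{4-n}(n+2l)^{-1}\int_0^r |u|^2\rho^{n-1}d\rho$; multiplying by $r^{n-1}$ cancels the singular power of $r$ and a second integration produces a uniform bound. For the weighted piece $\int_0^R l(l+n-2)|T^K_l u|^2 r^{n-3}dr$ (nontrivial only for $l\geq 1$), Cauchy-Schwarz with the weight $l(l+n-2)\rho^{n-3}$ inside the integral yields $|T^K_l u(r)|^2 \leq C r^{6-n}[l(l+n-2)(n+2l+2)]^{-1}\int_0^r l(l+n-2)|u|^2\rho^{n-3}d\rho$; the outer $l(l+n-2)$ cancels the copy in the denominator, leaving a constant of order $1/l$. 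For the derivative piece, differentiating the Volterra integral produces the boundary term $K^n_{lm}(r,r)u(r)=-(\lambda r/(2\epsilon))u(r)$, bounded in $L^2(r^{n-1}dr)$ by $u$, plus an integral with kernel $\partial_r K^n_{lm}$; splitting $\partial_r K^n_{lm}$ according to the two terms in the pointwise bound and applying Cauchy-Schwarz with weight $l(l+n-2)\rho^{n-3}$ for $l\geq 1$ (and with $\rho^{n-1}$ for $l=0$) absorbs the $(l+n-2)$ growth into the geometric decay, yielding a bound by $\Vert u\Vert_{l,H^1}^2$ with uniform constant. Summation over $(l,m)$ produces $\Vert\mathcal{K}[f]\Vert_{H^1}\leq C_{KH}\Vert f\Vert_{H^1}$, and the remaining three transformations are handled identically since their kernels share the same structural form.

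The main obstacle is the weighted $H^1$ piece: the outer $l(l+n-2)/r^2$ weight is unbounded in $l$, so any estimate that merely uses $|K^n_{lm}|\leq \mathrm{const}$ loses a full power of $l$ and the result fails. The required cancellation is structural and rests on integrating $(\rho/r)^{2(l+n-2)}$ against a measure of the form $\rho^{5-n}d\rho$, which produces a denominator of order $l$ that exactly compensates the outer weight; the same mechanism saves the derivative estimate, where the $(l+n-2)$ generated by differentiation of the geometric term would otherwise be uncontrolled. Once these uniform-in-$l$ bounds are established for all four kernel families, Parseval applied to (\ref{eqn-l2sph})--(\ref{eqn-h1sph}) delivers the eight inequalities with constants depending only on $R$, $\lambda$, $\epsilon$, and $n$.
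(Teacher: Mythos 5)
Your proof is correct and follows essentially the same route as the paper: decompose into spherical harmonics, bound each scalar Volterra operator uniformly in $l$ by combining the explicit kernel formulas with a Cauchy--Schwarz step that retains the geometric factor $\left(\rho/r\right)^{l+n-2}$, and reassemble through the Parseval identities (\ref{eqn-l2sph})--(\ref{eqn-h1sph}). You are in fact more careful than the paper's own (rather terse) argument on exactly the two points you flag --- the term of size $(l+n-2)r^{-1}(\rho/r)^{l+n-2}$ produced by differentiating the kernel in $r$, and the unbounded weight $l(l+n-2)/r^2$ in the $H^1$ summand --- both of which the paper's displayed estimates gloss over but which are correctly absorbed by the $l$-dependent weights in your Cauchy--Schwarz steps.
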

\begin{proof}
We show the result for the direct control transformation $\mathcal{K}$. The inverse transformation shares essentially the same structure and therefore the same proof applies.

First, note that, for $0\leq \rho \leq r \leq R$, and since $\frac{\mathrm{I}_1\left[x\right]}{x}$ is a continuous and differentiable function for $x\geq 0$, we have
\begin{eqnarray}
\frac{\lambda}{\epsilon} \frac{\mathrm{I}_1\left[\sqrt{\frac{\lambda}{\epsilon}(r^2-\rho^2)}\right]}{\sqrt{\frac{\lambda}{\epsilon}(r^2-\rho^2)}} \leq C_I, \quad \frac{\partial}{\partial r} \left( \frac{\lambda}{\epsilon} \frac{\mathrm{I}_1\left[\sqrt{\frac{\lambda}{\epsilon}(r^2-\rho^2)}\right]}{\sqrt{\frac{\lambda}{\epsilon}(r^2-\rho^2)}} \right) \leq C_J
\end{eqnarray}
where $C_I$ and $C_J$ are functions of $R$, $\lambda$ and $\epsilon$.

We work in spherical harmonics, for which the transformation is defined as (\ref{eqn-transf}). Then we have
\begin{eqnarray}
\vert g^{m}_{l} \vert^2 &=& \left|
f^{m}_{l}(r)-\int_0^r K_{lm}^n(r,\rho) f^{m}_{l}(\rho) d\rho
\right|^{2} 
\nonumber \\ 
&\leq&
2 \vert f^{m}_{l} \vert^2
+ 2 \left|
\int_0^r  K_{lm}^n(r,\rho) f^{m}_{l}(\rho) d\rho \right|^{2}
\nonumber \\ 
&\leq&
2 \vert f^{m}_{l} \vert^2
+ 2 C_I^2 \left(
\int_0^r  \rho \left( \frac{\rho}{r} \right)^{l+n-2}  \vert f^{m}_{l}(\rho) \vert d\rho \right)^{2}
\nonumber \\ 
&\leq&
2 \vert f^{m}_{l} \vert^2
+ 2 C_I^2  \left(
\int_0^r  \rho \left( \frac{\rho}{r} \right)^{l+n-2}  d\rho \right)
 \left(
\int_0^r  \rho \left( \frac{\rho}{r} \right)^{l+n-2}  \vert f^{m}_{l}(\rho) \vert^2d\rho \right)
\nonumber \\ 
&=&
2 \vert f^{m}_{l} \vert^2
+  \frac{2 C_I^2r^2}{l+n}
 \left(
\int_0^r  \rho \left( \frac{\rho}{r} \right)^{l+n-2}  \vert f^{m}_{l}(\rho) \vert^2d\rho \right)
\nonumber \\ 
&\leq&
2 \vert f^{m}_{l} \vert^2
+   \frac{2 C_I^2r^{4-n}}{n} \left(
\int_0^R \rho^{n-1} \vert f^{m}_{l}(\rho) \vert^2d\rho \right),\,
\end{eqnarray}
and therefore, applying (\ref{eqn-l2sph}),
\begin{eqnarray}
\Vert g \Vert_{L^2}^2
&\leq & \left(2+ \frac{ R^{4} C_I^2}{2n}\right) \Vert f \Vert_{L^2}^2= C_{KL} \Vert f \Vert_{L^2}^2.\quad
\end{eqnarray}
This shows the $L^2$ part of the proposition. To prove the $H^1$ part, note that
\begin{equation}
\frac{\partial g^{m}_{l}}{\partial r} =
\frac{\partial f^{m}_{l}}{\partial r} - K_{lm}^n(r,r) f^{m}_{l}(r) -\int_0^r \frac{\partial K_{lm}^n(r,\rho)}{\partial r} f^{m}_{l}(\rho) d\rho
\end{equation}
Thus
\begin{equation}
\left| \frac{\partial g^{m}_{l}}{\partial r} \right|^2 
\leq 
3 \left| \frac{\partial f^{m}_{l}}{\partial r} \right|^2
+3 C_I^2 \left|  f^{m}_{l} \right|^2
+   \frac{3 C_J^2r^{4-n}}{n} \left(
\int_0^R \rho^{n-1} \vert f^{m}_{l}(\rho) \vert^2d\rho \right),\,
\end{equation}
and we obtain from (\ref{eqn-h1sph})
\begin{eqnarray} 
\Vert g \Vert_{H^1}^2&\leq&
\sum_{l=0}^{l=\infty} \sum_{m=0}^{m=N(l,n)} 
\int_0^R \left[\left(1+\frac{l(l+n-2)}{r^2}\right)
\left[2 \vert f^{m}_{l} \vert^2
+   \frac{2 C_I^2r^{4-n}}{n} \left(
\int_0^R \rho^{n-1} \vert f^{m}_{l}(\rho) \vert^2d\rho \right)\right] 
\right. \nonumber \\ && 
+ \left. 3 \left| \frac{\partial f^{m}_{l}}{\partial r} \right|^2
+3 C_I^2 \left|  f^{m}_{l} \right|^2
+   \frac{2 C_J^2r^{4-n}}{n} \left(
\int_0^R \rho^{n-1} \vert f^{m}_{l}(\rho) \vert^2d\rho \right)
  \right]
r^{n-1} dr
\nonumber \\ &\leq & \left(3+ \frac{C_I^2R^4}{n} \right) \Vert f \Vert_{H^1}^2+\left(2+ 3 \frac{ R^{4}C_J^2}{4n}\right)\Vert f \Vert_{L^2}^2
 \nonumber \\ &=&  
C_{KH} \Vert f \Vert_{H^1}^2,
\end{eqnarray}
concluding the proof.
\end{proof}

\subsection{Stability of the target system}
Consider first the ($\tilde w,\hat w$) system  with control law (\ref{control-obs}), where $\hat w=\mathcal{K}[\hat u]$ and $\tilde w$ is defined by $\tilde w(t,\vec x)=\mathcal{P}[\tilde u]$.
The PDEs verified by ($\tilde w,\hat w$) are
\begin{eqnarray}\label{eqn-w}
\partial_t \tilde w&=&\epsilon \bigtriangleup_n \tilde w,\\ \label{eqn-what}
\partial_t \hat w&=&\epsilon \bigtriangleup_n \hat w-\mathcal{F}[\tilde w_r(t,\vec x)]
\end{eqnarray}
with boundary conditions
\begin{eqnarray}
\tilde w(t,\vec x)\Big|_{\vec x\in S^{n-1}(R)}&=&0,\label{eqn-bcw1} \\
\hat w(t,\vec x)\Big|_{\vec x\in S^{n-1}(R)}&=&0,\label{eqn-bcw2}
\end{eqnarray}
where $\mathcal{F}[\tilde w_r(t,\vec x)]$
is
\begin{equation}
\mathcal{F}[\tilde w_r(t,\vec x)]=\mathcal{K}\left[
\frac{\sqrt{ \lambda \epsilon} }{\mathrm{Area}(S^{n-1}) }   \mathrm{I}_1\left[\sqrt{\frac{\lambda}{\epsilon}(R^2-\Vert \vec x \Vert ^2)} \right]  \sqrt{R^2-\Vert \vec x \Vert ^2}  \int_{S^{n-1}(R)}  \frac{\tilde w_r(t,\vec \xi)}{\Vert \vec x - \vec \xi \Vert^n}
d\vec \xi\right],
\end{equation}
and with $\hat w_0=\mathcal{K}[\hat u_0]=0$, $\tilde w_0=\mathcal{P}[\tilde u_0]\in H^1_0$. Notice that the PDE system is actually a \emph{cascade} system; $\tilde w$ verifies an autonomous PDE and its solution (or more specifically, the integral of a certain trace of the solution on the boundary) drives the PDE for $\hat w$. The following result holds
\begin{proposition}\label{targetsystem}
Consider the system (\ref{eqn-w})--(\ref{eqn-bcw2}) with initial conditions $\tilde w_0\in H^1_0$, $\tilde w_0=0$ . Then, $\tilde w,\hat w \in{\cal C}\left[[0,\infty),H^1_0\right]\cap L^2\left[(0,\infty),H^2 \right]$  and also $\partial_t \tilde w, \partial_t \hat w \in L^2\left[(0,\infty),L^2 \right]$. Moreover, the following bounds are verified
\begin{eqnarray} \label{eqn-result}
\Vert \tilde w(t,\cdot) \Vert_{H^1} &\leq& D_1 \mathrm{e}^{-\alpha_1 t} \Vert \tilde w_0 \Vert_{H^1},\\
\Vert \tilde w(t,\cdot) \Vert_{H^1}+\Vert \hat w(t,\cdot) \Vert_{H^1} &\leq& D_2 \mathrm{e}^{-\alpha_2 t} \Vert \tilde w_0 \Vert_{H^1}, \label{eqn-result2}
\end{eqnarray}
where $D_1,D_2,\alpha_1,\alpha_2$ are positive constants.
\end{proposition}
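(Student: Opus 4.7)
The plan exploits the cascade structure of the system: $\tilde w$ satisfies an autonomous Dirichlet heat equation on $B^n(R)$, so one can study it in isolation, and then treat $\hat w$ as a forced heat equation whose source decays exponentially. I will therefore analyze $\tilde w$ first and use its decay to control $\hat w$ by a Duhamel argument.

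First, for (\ref{eqn-w})--(\ref{eqn-bcw1}), the operator $\epsilon\bigtriangleup_n$ with domain $H^2\cap H^1_0$ is self-adjoint and strictly negative on $L^2(B^n(R))$, so it generates an analytic contraction semigroup that decays exponentially in $H^1_0$ at rate $\epsilon\mu_1$, where $\mu_1>0$ is the first Dirichlet eigenvalue of $-\bigtriangleup_n$. Classical parabolic theory (Evans Ch.~7) then yields $\tilde w\in C([0,\infty);H^1_0)\cap L^2_{\mathrm{loc}}((0,\infty);H^2)$ with $\partial_t\tilde w\in L^2_{\mathrm{loc}}((0,\infty);L^2)$, and the semigroup estimate $\|\tilde w(t)\|_{H^1}\le e^{-\epsilon\mu_1 t}\|\tilde w_0\|_{H^1}$ gives (\ref{eqn-result}) with $\alpha_1=\epsilon\mu_1$. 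Pairing (\ref{eqn-w}) with $-\bigtriangleup_n\tilde w$ and integrating in time, combined with this exponential decay, upgrades the local integrability in time to the global bounds $\tilde w\in L^2((0,\infty);H^2)$ and $\partial_t\tilde w\in L^2((0,\infty);L^2)$.

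Second, for $\hat w$ I would decompose the forcing as $\mathcal{F}[\tilde w_r]=\mathcal{K}\bigl[\mathcal{G}[\tilde w_r|_{S^{n-1}(R)}]\bigr]$, where $\mathcal{G}$ is the weighted boundary integral appearing inside the brackets in the definition. The crucial observation is that the prefactor $\sqrt{R^2-\|\vec x\|^2}\,\mathrm{I}_1[\sqrt{(\lambda/\epsilon)(R^2-\|\vec x\|^2)}]$ vanishes like $\sqrt{R-\|\vec x\|}$ as $\vec x\to\partial B^n(R)$, and therefore tames the Poisson-type singularity $\|\vec x-\vec\xi\|^{-n}$ when $\vec\xi$ is integrated over $S^{n-1}(R)$. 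A direct kernel estimate (splitting the ball into a bulk region and a thin collar near the boundary) then gives continuity of $\mathcal{G}$ from $L^2(S^{n-1}(R))$ into both $L^2(B^n(R))$ and $H^1(B^n(R))$. Combined with Proposition~\ref{prop-hnorms} and the trace inequality $\|\partial_r v\|_{L^2(S^{n-1}(R))}\le C\|v\|_{H^2(B^n(R))}$, this produces the estimate
\begin{equation*}
\|\mathcal{F}[\tilde w_r(t,\cdot)]\|_{H^1}\le C\|\tilde w(t,\cdot)\|_{H^2}.
\end{equation*}
Since $\hat w_0=0$, Duhamel's formula gives $\hat w(t)=-\int_0^t e^{(t-s)\epsilon\bigtriangleup_n}\mathcal{F}[\tilde w_r(s)]\,ds$; combining the exponential $H^1_0$-contractivity of $e^{t\epsilon\bigtriangleup_n}$ with maximal parabolic regularity (in the exponentially weighted setting on $(0,\infty)$) yields $\hat w\in C([0,\infty);H^1_0)\cap L^2((0,\infty);H^2)$, $\partial_t\hat w\in L^2((0,\infty);L^2)$, and
\begin{equation*}
\|\hat w(t)\|_{H^1}\le C\int_0^t e^{-\epsilon\mu_1(t-s)}\|\tilde w(s)\|_{H^2}\,ds.
\end{equation*}
A standard splitting of this integral, together with the global weighted $L^2_tH^2_x$ control of $\tilde w$ by $\|\tilde w_0\|_{H^1}$ derived in Step~1, produces decay at any rate $\alpha_2<\min(\alpha_1,\epsilon\mu_1)$, proving (\ref{eqn-result2}).

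The hard part will be the kernel estimate on $\mathcal{G}$ in the second step: the factor $\|\vec x-\vec\xi\|^{-n}$ is not integrable on $S^{n-1}(R)$ as $\vec x$ approaches $\partial B^n(R)$, so the vanishing weight $\sqrt{R^2-\|\vec x\|^2}$ must be exploited carefully, especially when estimating the radial derivative needed for the $H^1$ bound. A possibly cleaner alternative is to work modewise in the harmonics $\hat w^m_l$, using that the $l$-th mode of $\mathcal{G}[\tilde w_r|_{S^{n-1}(R)}]$ is a scalar multiple of the corresponding boundary trace of $\tilde w^m_{lr}$, and then reassembling the bound via (\ref{eqn-h1sph}); this would sidestep kernel integrability entirely and inherit the decay directly from the per-mode semigroups.
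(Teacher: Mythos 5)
Your proposal is correct in outline but follows a genuinely different route from the paper. The paper never invokes semigroups or Duhamel's formula: it runs a pure Lyapunov argument with the functionals $V_1=\tfrac12\Vert\tilde w\Vert_{L^2}^2$, $V_2=\tfrac12\Vert\vec\nabla_n\tilde w\Vert_{L^2}^2$ and their analogues $V_3,V_4$ for $\hat w$, obtaining $\dot V_2=-\epsilon\Vert\bigtriangleup_n\tilde w\Vert_{L^2}^2$ and $\dot V_3+\dot V_4\leq -\tfrac{\epsilon(1+C_p)}{2}(V_3+V_4)+K_F\Vert\bigtriangleup_n\tilde w\Vert_{L^2}^2$; the cascade forcing is then absorbed by the weighted combination $V_5=V_1+V_2+\tfrac{\epsilon}{K_F}(V_3+V_4)$, whose $\hat w$-contribution $\epsilon\Vert\bigtriangleup_n\tilde w\Vert_{L^2}^2$ is exactly cancelled by the dissipation in $\dot V_2$. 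Your Duhamel argument is the semigroup counterpart of this cancellation: where the paper trades the $\Vert\bigtriangleup_n\tilde w\Vert_{L^2}^2$ term against $\dot V_2$ pointwise in time, you trade it against the exponentially weighted $L^2_tH^2_x$ bound inside the convolution integral. The two approaches hinge on the identical key chain of estimates $\Vert\mathcal{F}[\tilde w_r]\Vert_{L^2}\lesssim\Vert\tilde w_r\Vert_{L^2(S^{n-1}(R))}\lesssim\Vert\tilde w\Vert_{H^2}\lesssim\Vert\bigtriangleup_n\tilde w\Vert_{L^2}$ (boundedness of the output-injection operator, trace theorem, elliptic regularity), and you are right to flag the first of these as the delicate point: the paper simply asserts the constant $K_S$ without proof. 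Your proposed modewise verification is in fact the natural way to supply it, since $p^n_{lm}(r)=\epsilon P^n_{lm}(r,R)$ is bounded by a constant times $r(r/R)^{l-1}$, whose squared $r^{n-1}dr$-integral is $O(1/l)$ uniformly in $l$, so $\mathcal{G}$ is bounded from $L^2(S^{n-1}(R))$ to $L^2(B^n(R))$ by (\ref{eqn-l2sph}); this sidesteps the non-integrable surface singularity $\Vert\vec x-\vec\xi\Vert^{-n}$ entirely. What the paper's route buys is brevity and avoidance of fractional-power semigroup estimates (your $H^1$ Duhamel bound needs either the $t^{-1/2}$-smoothing of $e^{t\epsilon\bigtriangleup_n}$ from $L^2$ to $H^1_0$ or an $H^1$ bound on $\mathcal{F}$, since $\mathcal{F}[\tilde w_r]$ need not lie in $H^1_0$); what your route buys is a cleaner separation of the autonomous decay of $\tilde w$ from the forced problem for $\hat w$, and an explicit identification of the achievable decay rate $\alpha_2<\epsilon\mu_1$.
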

The well-posedness part of the result is standard for the $\tilde w$ system (see for instance~\cite{brezis}, p. 328). For the $\hat w$ system, notice that, given the regularity of $\tilde w$, the trace of $\tilde w_r$ on the sphere is an $L^2$ function. Since $\mathcal{F}[\tilde w_r(t,\vec x)]$ basically amounts to the composition of the observer and controller transformation, which map $L^2$ into $L^2$, we obtain the same regularity result (see for instance~\cite{brezis}, p. 357).

The stability estimate is obtained using a Lyapunov argument. Define  $V_1(t)=\frac{1}{2} \Vert \tilde w(t,\cdot) \Vert_{L^2}^2$ and $V_2(t)=\frac{1}{2} \Vert \vec \nabla_n \tilde w(t,\vec x)\Vert_{L^2}^2$. Then, first
\begin{equation}
\dot V_1=\epsilon \int_{B^n(R)}w(t,\vec x)\bigtriangleup_n \tilde w(t,\vec x)
d\vec x.
\end{equation}
Applying one of Green's formulas (see~\cite{evans}, p.628), and since the normal on the boundary of the $B^n(R)$ is just $\frac{\vec x} {R}$
\begin{eqnarray}
\dot V_1&=&-\epsilon \int_{B^n(R)} \Vert \vec \nabla_n \tilde w(t,\vec x)\Vert^2
d\vec x
+\int_{S^{n-1}(R)} \tilde w(t,\vec x) \frac{\vec \nabla_n \tilde w(t,\vec x) \cdot \vec x}{R} d\vec x,\label{eqn-green} 
\end{eqnarray}
where $\vec \nabla_n$ is the gradient operator in $n$-dimensional space. The second integral of (\ref{eqn-green}) is found to be zero by applying (\ref{eqn-bcw1}). To bound the first integral, we use a Poincar\'e-type inequality (see for instance~\cite{brezis}, page 290),
\begin{equation}
\int_{B^n(R)}  w^2(t,\vec x)
d\vec x
\leq C_p
\int_{B^n(R)} \Vert \vec \nabla_n w(t,\vec x)\Vert^2
d\vec x,
\end{equation}
which also implies $V_1 \leq C_p V_2$.
thus we reach $\dot V_1=-2 \epsilon V_2 \leq -2C_p\epsilon V_1$. On the other hand,
\begin{equation}
\dot V_2=\epsilon \int_{B^n(R)} \vec \nabla_n \tilde w \cdot  \vec \nabla_n \tilde w_t
d\vec x.
\end{equation}
Applying again one of Green's formulas, we obtain
\begin{eqnarray}
\dot V_2=- \epsilon \int_{B^n(R)} (\bigtriangleup_n \tilde w(t,\vec x))^2
d\vec x=- \epsilon \Vert \bigtriangleup_n \tilde w \Vert_{L^2}^2,
\end{eqnarray}
and therefore we obtain that
$$
\dot V_1+\dot V_2 \leq -\epsilon(1+C_p) (V_1+V_2),
$$
and applying Gronwall's Inequality we obtain the stability result (\ref{eqn-result}). To obtain (\ref{eqn-result2}), define now $V_3(t)=\frac{1}{2} \Vert \hat w(t,\cdot) \Vert_{L^2}^2$ and $V_4(t)=\frac{1}{2} \Vert \vec \nabla_n \hat w(t,\vec x)\Vert_{L^2}^2$. We obtain the same results as before with additional terms due to the forcing function in (\ref{eqn-what}), namely
\begin{eqnarray}
\dot V_3&=&-2 \epsilon V_4 + \int_{B^n(R)}\hat w \mathcal{F}[\tilde w_r(t,\vec x)] d\vec x,\\
\dot V_4&=&- \epsilon \int_{B^n(R)} (\bigtriangleup_n \hat w(t,\vec x))^2 + \int_{B^n(R)}\bigtriangleup_n \hat w \mathcal{F}[\tilde w_r(t,\vec x)] d\vec x
\end{eqnarray}
which can be bounded as
\begin{eqnarray}
\dot V_3&\leq&-\frac{\epsilon(1+C_p)}{2}(V_3 + V_4)+\frac{1}{2\epsilon C_p}\int_{B^n(R)}\left( \mathcal{F}[\tilde w_r(t,\vec x)]\right)^2 d\vec x,\\
\dot V_4&\leq &\frac{1}{4\epsilon}\int_{B^n(R)}\left( \mathcal{F}[\tilde w_r(t,\vec x)]\right)^2 d\vec x
\end{eqnarray}
On the other hand, 
\begin{eqnarray}
\int_{B^n(R)}\left( \mathcal{F}[\tilde w_r(t,\vec x)]\right)^2 d\vec x \leq K_S \int_{S^{n-1}(R)}\tilde w_r^2(t,\vec x) d\vec x,
\end{eqnarray}
and by virtue of the trace theorem~\cite[p. 258]{evans}
\begin{eqnarray}
\int_{S^{n-1}(R)}\tilde w_r^2(t,\vec x) d\vec x
\leq K_T  \Vert \tilde w_r \Vert_{H^1}^2 
\end{eqnarray}
and since $\tilde w$ vanishes at the boundary $\Vert \tilde w_r \Vert_{H^1}^2 \leq K_V \Vert \bigtriangleup_n \tilde w \Vert_{L^2}^2$. Thus, we reach
\begin{eqnarray}
\dot V_3+\dot V_4&\leq&-\frac{\epsilon(1+C_p)}{2}(V_3 + V_4)+K_F \Vert \bigtriangleup_n \tilde w \Vert_{L^2}^2,
\end{eqnarray}
for $K_F$ positive. Then, defining $V_5=V_1+V_2+\frac{\epsilon}{K_F} (V_3+V_4)$, we obtain $\dot V_5\leq -K_W V_5$, and applying Gronwall's Inequality and taking into account $\hat w_0=0$ we obtain the final result (\ref{eqn-result2}).
\subsection{Proof of Theorem~\ref{th-main2}}
Using Proposition~\ref{prop-hnorms} and Proposition~\ref{targetsystem}, we directly obtain the well-posedness result of Theorem~\ref{th-main2}. Finally, the stability result is found as follows
\begin{eqnarray}
\Vert u(t,\cdot) \Vert_{H^1}+\Vert \hat u(t,\cdot) \Vert_{H^1} &\leq& C_{PH}
\Vert \tilde w (t,\cdot) \Vert_{H^1}+( C_{PH}+C_{KH}) \Vert \hat w(t,\cdot) \Vert_{H^1}
\nonumber \\
&\leq &
( C_{PH}+C_{KH}) D_2 \mathrm{e}^{-\alpha_2 t} \Vert \tilde w_0 \Vert_{H^1}
\nonumber \\
&\leq &
C_{RH} ( C_{PH}+C_{KH}) D_2 \mathrm{e}^{-\alpha_2 t} \Vert u_0 \Vert_{H^1},
\end{eqnarray}
thus concluding the proof.

\section{Conclusion}\label{sec-conclusions}
We have shown an explicit output-feedback design to stabilize a constant-coefficient reaction-diffusion equation on an $n$-ball, by using a boundary output-feedback control law found using backstepping. The resulting control law and observer injection gains have a remarkable structure. They are formulated as a multiple integral whose kernel is a product of two factors, the first of which is identical to the backstepping kernel used in control of one-dimensional reaction-diffusion equations. The second factor is closely related to the Poisson kernel in the $n$-ball (a function used to solve Laplace's problem). While an $H^1$ stability result has been achieved, future work includes analysing well-posedness and stability in higher Sobolev spaces to improve the regularity of the closed-loop solutions. Other open problems include extending the result to spatially- or time-varying coefficients, or to more complicated domains.

Finally, recent applications of PDE control theory in the field of multi-agent systems suggest that a possible application of this $n$-dimensional result might lie in control of complex, large dimensional  systems, such as social networks and opinion dynamics.

\end{document}